\def\draft{0}
\newcommand{\emptysequence}{e}
\theoremstyle{plain}
\newtheorem{theorem}{Theorem}
\newtheorem{proposition}{Proposition}
\newtheorem{lemma}{Lemma}
\newtheorem{corollary}{Corollary}
\theoremstyle{definition}
\newtheorem{definition}{Definition}
\newtheorem{example}{Example}
\theoremstyle{remark}
\newtheorem*{claim*}{Claim}
\theoremstyle{plain}
\newtheorem*{martins-theorem}{Martin's Theorem}
\newtheorem*{kuhns-theorem}{Kuhn's Theorem}
\newtheorem*{definetti-theorem}{De-Finetti's Theorem}
\newcommand{\eran}[1]{\begin{framed}{\noindent \scriptsize{\bf  Eran's Note:}  #1}\end{framed}}
\newcommand{\eran}[1]{}
\begin{document}
\title{A Prequential Test for Exchangeable Theories}
\author{Alvaro Sandroni}
\email{sandroni@kellogg.northwestern.edu}
\address{Kellogg School of Management, Northwestern University}
\author{Eran Shmaya}
\email{e-shmaya@kellogg.northwestern.edu}
\address{Kellogg School of Management, Northwestern University}
\subjclass[2000]{Primary: 62A01. Secondary: 91A40}

\begin{abstract}
We construct a prequential test of probabilistic forecasts that does not
reject correct forecasts when the data-generating processes is exchangeable
and is not manipulable by a false forecaster.
\end{abstract}

\maketitle

\section{Introduction}

A potential expert, named Bob, claims that he is informed about which
stochastic process will generate the future data. A tester, named Alice,
does not know whether Bob does, in fact, know anything relevant about the
process (i.e., something Alice does not know). However, Bob will repeatedly
announce a forecast of next period's outcomes. A basic question is whether
Alice can eventually determine whether Bob is informed. A growing literature
answers this question in the negative (see Olszewski (2011) for a literature
review).

A prequential test takes, as input, Bob's forecasts and realized outcomes
and returns, as output, a reject or pass verdict. Sandroni (2003) shows that
if a perfectly informed expert (i.e., predicts according to the data
generating process) is likely to pass Alice's test then a completely
uninformed expert (i.e., knows nothing about the data generating process)
can strategically concoct forecasts that are likely to pass the test,
regardless of the realized outcomes. Thus, Alice's test cannot screen
informed and uninformed, but strategic, experts. Olszewski and Sandroni
(2008) extended this result to finite, possibly unbounded horizon.

%Olszewski and
%Sandroni (2008) show that if a perfectly informed expert (i.e., predicts
%according to the data generating process) is likely to pass Alice's test
%then a completely uninformed expert (i.e., knows nothing about the data
%generating process) is equally likely to pass the same test, provided that
%he forecast strategically to manipulate the test. Thus, Alice's test cannot
%screen informed and uninformed, but strategic, experts.

There are no clear benefits of running a test if it is known, at the outset,
that it will return a pass verdict. So, the results in this literature cast
doubt on the use of empirical tests when experts are potentially strategic.
However, two key assumptions underlying the impossibility results should be
examined. First, at the outset, Alice knows nothing about how the data will
be produced. Second, the data available to her is finite.%\footnote{%
%In addition, it is also implicitly assumed that the experts know how they
%will be tested. The successful forecasting schemes of the uninformed expert
%depends upon the test. We maintain this assumption in this paper.}

The first condition (Alice's complete ignorance) imposes no restrictions on
the types of stochastic processes that may generate the data. So, the
results hold for tests that are unlikely to reject \textit{any} data
generating process. In contrast, consider the case in which Bob and Alice
assume that the data will be produced within a class of data generating
processes, such as exchangeable processes. Then Alice may look for tests
that do not reject any exchangeable process and she may require Bob to
forecast consistently with some exchangeable process. This allows for
empirical testing that relies on a priory assumptions on how the data might
be produced.

The second condition (finite data sets) is typically justified based on the
practical observation that the data cannot feasibly be infinite. However,
the impossibility results in this literature are conceptual: they show the
existence of forecasting schemes that can manipulate Alice's test, but they
do not determine how to construct such manipulating schemes in practice. The
assumption of finite data sets may be significant because the forecasting
schemes that manipulate the test may involve complex processes that require
large, potentially infinite, streams of data to be effectively tested.

It is tempting to speculate that Alice's ignorance, combined with finite
data sets, is central to the impossibility of screening informed and
uninformed experts. An ignorant, but strategic, expert may be able to
exploit the tester's ignorance, combined with her limited data sets, to
manipulate her test. However, the impossibility theorems still hold even if
any one of these two conditions is disposed:

Assume that it is known, to both Alice and Bob, that the data generating
process is an exchangeable processes. Alice does not know which exchangeable
process runs the data and Bob claims that he knows the exact process (before
any data is observed). Then, Alice has, a priory, a significant
understanding of how the data will be produced. She knows that the data is
produced by some exchangeable process and her empirical test may rely on
this condition. Yet, with finite data %, but perhaps unbounded data sets, 
the impossibility results still hold: Any prequential test that is likely to
pass an informed expert can be manipulated by an uninformed expert, and so
is equally likely to pass an uninformed expert even if he is restricted to
produce forecasts based on an exchangeable process (Proposition 1 below).%
% (see Olszewski and Sandroni (2007a) for a proof based on Fan's Minmax Theorem).

Now consider the case of infinite data sets, but with no restrictions on the
types of stochastic processes that may generate the data. %Using Martin's
%Theorem about determinacy of Blackwell Games (Martin (1998)), 
Shmaya (2008) showed that the impossibility results still hold: Any
prequential test that is likely to pass an informed expert is also likely to
pass an uninformed, but strategic, expert (Proposition 2 below). 
%This follows even if the tester has an infinite sequence of next period's predictions and realizations at her disposal. 

Thus, neither a restriction to exchangeable processes, nor infinite
sequences of forecasts and outcomes, can, by itself, rule out the
impossibility theorems. The question addressed in this paper is whether the
impossibility theorem still hold if it is assumed \textit{simultaneously}
that the data generating process is exchangeable and that the tester can
obtain large, perhaps infinite, sequences of forecasts and outcomes. Under
these conditions, we show that the impossibility theorems do not hold. We
construct a prequential test that is likely to pass an informed expert and
cannot be manipulated by an uninformed expert; no matter which forecasting
scheme he uses, he cannot be assured that he is likely to pass the test.

Here is the basic idea of our test: By De-Finetti's Theorem, every
exchangeable process has a representation as a sequence of i.i.d.\ coin
tosses with an unknown parameter $q$. By the law of large numbers, the
infinite realization pins down the parameter $q$. In addition, we show that
after observing Bob's forecasts along the realization Alice can deduce Bob's
prior distribution on the parameter. So, the sequential framework reduces to
a one-shot framework in which Bob delivers a prior that (he claims)
generates a parameter $q$ that is observed by Alice. The existence of
non-manipulable test can now be obtained by adapting a result in Olszewski
and Sandroni (2009b) to this one-shot framework.

Our argument reveals a non-intuitive distinction between the infinite and
finite horizon setups. With infinite streams of forecasts and outcomes, it
is possible to determine $q$ and, hence, to screen informed and uninformed
experts. With finite (perhaps arbitrarily long) streams of forecasts and
outcomes, it is possible to make arbitrarily accurate statistical inferences
about $q$ and yet this is insufficient to screen informed and uninformed
experts.

% In the case of unrestricted processes, the same impossibility theorem holds
% with finite and infinite sequences of forecasts and outcomes. However, if
% processes are restricted to be exchangeable then the results with a finite %,perhaps unbounded, 
% sequences of forecasts and outcomes, are fundamentally
% different from the results with infinite sequences of forecasts and
% outcomes. In the latter case, the impossibility theorems do not hold. In the
% former case, they do.

\section{Setup}

\label{setup} Let $S$ be a finite set of \emph{outcomes}. Every period $%
n=0,1,\dots$ an outcome is realized. Let $\Omega=S^\mathbb{N}$ be the set of 
\emph{realizations} equipped with the product topology. A \emph{theory} is a
probability measure over $\Omega$, representing the distribution of some
stochastic process. The set $\Delta(\Omega)$ of theories is a convex and
compact metrizable subspace of the topological vector space of all signed
measures equipped with the weak-$^\ast$ topology. A \emph{paradigm} is a
set of theories.

%For every realization $x\in S^\omega$ and every $n\in\omega$ we let $x|_n$ be the initial segment of $x$ of length $n$. 
Let $S^{<\mathbb{N}}=\bigcup_{n\ge 0}S^n$ be the set of \emph{partial
realizations} including the empty sequence $\emptysequence$. For every partial realization $\sigma=(s_0,\dots,s_{n-1})$
let $N(\sigma)\subseteq S^\mathbb{N}$ be the set of realizations $x\in S^\mathbb{N}
$ that extends $\sigma$, i.e. such that $\sigma$ is the initial segment of $x
$ of length $n$. In particular $N(\emptysequence)=S^\mathbb{N}$.
%$x|_n=\sigma$. %For every partial realization $\sigma=(s_0,\dots,s_{n-1})\in S^{<\bbn}$ and every outcome $s\in S$ let $\sigma\concat s=(s_0,\dots,s_{n-1},s)$.

\begin{definition}
\label{forecast}Let $\mu$ be a theory and $x\in S^\mathbb{N}=(s_0,s_1,\dots)$
a realization. The \emph{forecast of $\mu$ over $x$ at period $n$} is the
element $p_n\in \Delta(S)$ that is given by 
\begin{equation}  \label{p-n-s}
p_n[s]=\mu(s|s_0,\dots,s_{n-1})=\frac{\mu\left(N\left(s_0,\dots,s_{n-1},s%
\right)\right)}{\mu\left(N\left(s_0,\dots,s_{n-1}\right)\right)}
\end{equation}
for every outcome $s\in S$. That is, the forecast at period $n$ is
the forecast made at period $n-1$ (based on all information available at
period $n-1$) on the outcome at period $n$. Here and later, when $n=0$ the sequence $(s_0,\dots,s_{n-1})$ refers to the empty sequence $\emptysequence$. The forecast $p_n$ is undefined
if $\mu\left(N\left(s_0,\dots,s_{n-1}\right)\right)=0$. %When we
%want to emphasize the dependence on $\mu$ and $x$ we also denote $%
%\mu(s|s_0,\dots,s_{n-1})$ for $p_n[s]$.
\end{definition}

A \emph{prequential test} is given by a Borel function $T:(\Delta(S)\times
S)^\mathbb{N}\rightarrow\{\text{FAIL},\text{PASS}\}$: Bob passes the test if 
$T(p_0,s_0,p_1,s_1,\dots)=\text{PASS}$, where, for every $k\in\mathbb{N}$, $%
p_k$ is the forecast on period $k$ according to Bob's theory and $s_k$ is
the outcome of period $k$. {This definition of test is based on Dawid's
(1984) \emph{prequential principle} that assessment of a theory $\mu$ should
depend on $\mu$ only through the sequence of forecasts that $\mu$ made over
the realized sequence of outcomes, and not on forecasts conditional on data
that has not been realized. {More general tests allow the tester (Alice) to
use forecasts of Bob's theory after any finite history, including the
histories that did not occur. This paper focuses on prequential tests.} }

The test is \emph{finite} if, for some $n\in\mathbb{N}$, $T$ depends only on
the forecasts and outcomes of the first $n$ periods.

\begin{definition}
\label{acceptance-set}Fix a prequential test $T$ and let $\mu$ be a theory.

\begin{enumerate}
\item $\mu$ \emph{passes the test over a realization $x=(s_0,s_1,\dots)\in S^%
\mathbb{N}$} if $\mu(N(s_0,\dots,s_{n-1}))>0$ for every $n$ and $%
T(p_0,s_0,p_1,s_1,\dots)=\text{PASS}$, where $p_k$ is the forecast of $\mu$
over $x$ at period $k$.

\item The \emph{acceptance set of $\mu$ under $T$}, denoted $A_\mu^{(T)}$ is
the set of all realizations $x\in S^\mathbb{N}$ such that $\mu$ passes the
test over $x$.
\end{enumerate}
\end{definition}

\begin{definition}
\label{accept}The prequential test $T$ \emph{accepts the data generating
process in a paradigm $\Gamma$ with probability $1-\epsilon$} if $%
\mu\left(A_\mu^{(T)}\right)\ge 1-\epsilon$ for every $\mu\in \Gamma$.
\end{definition}

If the test accepts the data generating process with high probability and
Bob's theory $\mu$ is the data generating process then Bob is assured that
he will pass the test with high probability.

\begin{definition}
\label{manipulate}Let $\Gamma$ be a paradigm. A test $T$ is $\epsilon$%
-manipulable w.r.t $\Gamma$ if there exists some distribution $\zeta$ over $%
\Gamma$ such that 
\begin{equation*}
\zeta\otimes\mu\left(\left\{(\nu,x)\colon x\in
A_\nu^{(T)}\right\}\right)>1-\epsilon
\end{equation*}
for every theory $\mu\in\Gamma$.
\end{definition}

If a test is $\epsilon$-manipulable then Bob can randomize a theory
according to $\zeta$ and be sure that, with high probability, he passes the
test regardless of the data generating process $\mu$.

If a test is not manipulable then every strategy to fake theories fails on
at least one data-generating process. Following Dekel and Feinberg (2006) and Olszewski and Sandroni (2009b), we use in this paper a stronger
condition, which entails that every strategy employed by an ignorant expert
will fail on a topologically large set of data generating processes. Recall
that a set $M$ of a separable metric space $Z$ is co-meager if it contains a
countable intersection of dense open sets of $Z$. We say that a property is
satisfied by co-meager many elements in $Z$ if the set of elements that
satisfy this property is co-meager.
\begin{definition}
\label{def:strongly} Let $\Gamma$ be a paradigm. A test $T$ is \emph{%
strongly non-manipulable w.r.t $\Gamma$} if, for every distribution $\zeta$
over $\Gamma$, 
\begin{equation}
\zeta\otimes\mu\left(\left\{(\nu,x)\colon x\in
A_\nu^{(T)}\right\}\right)=0
\end{equation}
for co-meager many $\mu$-s in $\Gamma$.
\end{definition}

Note that in Definition~\ref{def:strongly} we require that the set of $\mu$
on which the expert fail be co-meager relative to the topological space $%
\Gamma$. We do not assume that $\Gamma$ itself is a co-meager subset of $%
\Delta(\Omega)$. Indeed, most paradigms used in modeling, i.e., most
interesting classes of stochastic processes (stationary processes, markov
processes, mixing processes) are meager as subsets of $\Delta(\Omega)$.
\section{Results}

\label{results}

\subsection{Previous results}

The following result extends an argument of Sandroni (2003) to convex and
compact paradigms. The proof relies on Fan's Minimax Theorem. See also Vovk,
V. and G. Shafer (2005) for a related result. For completeness, we reproduce
the proof in the appendix. 
\begin{proposition}
\label{finite-old} Fix $\epsilon>0$ and let $\Gamma$ be a convex and compact
paradigm. Let $T$ be a finite test. If $T$ accepts the data generating
process in the paradigm $\Gamma$ with probability $1-\epsilon$ then $T$ is $%
(\epsilon+\delta)$-manipulable w.r.t $\Gamma$ for every $\delta>0$.
\end{proposition}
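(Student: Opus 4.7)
The plan is to set up a zero-sum game with Nature choosing $\mu\in\Gamma$ and Bob choosing a mixed strategy $\zeta\in\Delta(\Gamma)$, with payoff $f(\mu,\zeta)=\int_\Gamma\mu\bigl(A_\nu^{(T)}\bigr)\,d\zeta(\nu)$ that Bob wants to maximize and Nature to minimize. The hypothesis that $T$ accepts the data-generating process with probability $1-\epsilon$ says exactly that $f(\mu,\delta_\mu)=\mu\bigl(A_\mu^{(T)}\bigr)\ge 1-\epsilon$ for every $\mu\in\Gamma$, so Bob does well along the diagonal. Unpacking the definitions, $(\epsilon+\delta)$-manipulability is the assertion that some $\zeta^*\in\Delta(\Gamma)$ satisfies $f(\mu,\zeta^*)>1-(\epsilon+\delta)$ for every $\mu\in\Gamma$, so the goal is to upgrade this diagonal lower bound into a uniform one via minimax.

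The engine would be Fan's minimax theorem applied to the compact convex set $\Gamma$ on the minimizing side and the convex set $\Delta(\Gamma)$ on the maximizing side; this requires $(i)$ lower semicontinuity and convexity of $\mu\mapsto f(\mu,\zeta)$ for each fixed $\zeta$, and $(ii)$ concavity of $\zeta\mapsto f(\mu,\zeta)$ for each fixed $\mu$. Property $(ii)$ is immediate because $f$ is linear in $\zeta$. I expect the main step to be verifying $(i)$, and this is where the finiteness of $T$ enters essentially: if the horizon is $N$, then for each $\nu\in\Gamma$ the acceptance set $A_\nu^{(T)}$ is a finite union of basic cylinders $N(\sigma)$ with $\sigma\in S^N$, indexed by those $\sigma$ on which $T$ outputs $\text{PASS}$ when fed the forecasts of $\nu$ along $\sigma$. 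Since $S$ is finite and discrete, each $N(\sigma)$ is clopen in $\Omega$, so $\mu\mapsto\mu\bigl(N(\sigma)\bigr)$ is weak-$^\ast$ continuous; hence $\mu\mapsto\mu\bigl(A_\nu^{(T)}\bigr)$ is continuous on $\Gamma$, and bounded convergence yields continuity of $\mu\mapsto f(\mu,\zeta)$, which is moreover linear (hence convex) in $\mu$.

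With Fan's theorem in hand I would compute the upper value $\min_\mu\sup_\zeta f(\mu,\zeta)$: since $f$ is linear in $\zeta$, the inner supremum is attained at a Dirac mass, so this quantity equals $\min_\mu\sup_{\nu\in\Gamma}\mu\bigl(A_\nu^{(T)}\bigr)\ge\min_\mu\mu\bigl(A_\mu^{(T)}\bigr)\ge 1-\epsilon$. Consequently $\sup_\zeta\min_\mu f(\mu,\zeta)\ge 1-\epsilon$, and for every $\delta>0$ a $\zeta^*\in\Delta(\Gamma)$ can be chosen with $f(\mu,\zeta^*)>1-\epsilon-\delta/2>1-(\epsilon+\delta)$ uniformly in $\mu$. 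Unwinding, this reads $\zeta^*\otimes\mu\bigl(\{(\nu,x)\colon x\in A_\nu^{(T)}\}\bigr)>1-(\epsilon+\delta)$ for every $\mu\in\Gamma$, witnessing $(\epsilon+\delta)$-manipulability. The essential use of the finiteness of the test lies entirely in clause $(i)$; without it, $\mu\mapsto\mu\bigl(A_\nu^{(T)}\bigr)$ would generally fail to be weak-$^\ast$ continuous and the minimax argument would break down, which is consistent with the fact that the impossibility is circumvented in the infinite-horizon setting analyzed later in the paper.
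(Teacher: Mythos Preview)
Your proposal is correct and follows essentially the same route as the paper: set up the bilinear payoff $f(\mu,\zeta)=\zeta\otimes\mu(\{(\nu,x):x\in A_\nu^{(T)}\})$, invoke Fan's minimax theorem using compactness of $\Gamma$ and continuity in $\mu$ (coming from finiteness of $T$), and bound the value from below via the Dirac strategies $\zeta=\delta_\mu$. The paper's appendix proof is identical in structure but terser; you have simply spelled out why finiteness of the horizon makes each $A_\nu^{(T)}$ clopen and hence $\mu\mapsto\mu(A_\nu^{(T)})$ weak-$^\ast$ continuous, which the paper asserts without elaboration.
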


The following result was proved by Shmaya (2008). The proof relies on
Martin's Theorem about determinacy of Blackwell games (Martin (1998)).
\begin{proposition}
\label{infinite-old}Fix $\epsilon>0$ and let $\Gamma=\Delta(\Omega)$ be the
paradigm of all theories. Let $T$ be a prequential test. If $T$ accepts the
data generating process in the paradigm $\Gamma$ with probability $%
1-\epsilon $ then $T$ is $(\epsilon+\delta)$-manipulable w.r.t $\Gamma$ for
every $\delta>0$.
\end{proposition}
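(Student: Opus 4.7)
I would reformulate the question as a two-player zero-sum Blackwell game $G$ between the expert and the adversary. At each stage $n \in \bbn$, the expert picks a (possibly random) forecast $p_n \in \Delta(S)$ and the adversary simultaneously picks a (possibly random) outcome $s_n \in S$, each conditioning on the common past history. The infinite play produces $(p_0,s_0,p_1,s_1,\dots)$, and the expert wins when $T(p_0,s_0,p_1,s_1,\dots)=\pass$, which is a Borel winning condition. A pure strategy for the expert is a Borel map from finite histories to $\Delta(S)$, hence is exactly a theory $\nu \in \Delta(\Omega)$, and the adversary's behavior strategies correspond, by Kuhn's theorem, to theories $\mu \in \Delta(\Omega)$.

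The key observation toward the value bound is that for any pure adversary strategy $\mu$, the expert can reply by playing the same theory $\mu$ as a pure strategy: the realization $x=(s_0,s_1,\dots)$ then has law $\mu$ and the expert's forecasts along $x$ are precisely the forecasts of $\mu$, so the expert wins with probability $\mu(A_\mu^{(T)}) \geq 1-\epsilon$ by the hypothesis of Definition~\ref{accept}. Hence the adversary cannot hold the value of $G$ below $1-\epsilon$.

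I would then invoke Martin's theorem on the determinacy of Blackwell games with Borel winning sets, which asserts that $G$ has a value and that each player admits $\delta$-optimal strategies for every $\delta>0$. The main obstacle is that the expert's action space $\Delta(S)$ is compact but not finite, whereas Martin's theorem is classically stated for finite action sets. I would handle this by approximating $\Delta(S)$ along a sequence of finer finite $\eta$-nets, applying Martin's theorem to each discretized game, and passing to the limit using compactness of the space of behavior strategies together with a standard reduction of $T$ to an upper-semicontinuous proxy in the forecast coordinate; this shows that the values of the discretized games converge to the value of $G$. Determinacy then yields a behavior strategy $\sigma^*$ for the expert guaranteeing winning probability at least $1-\epsilon-\delta/2$ against every adversary strategy. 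Finally, Kuhn's theorem converts $\sigma^*$ into a mixed strategy, that is, a distribution $\zeta^* \in \Delta(\Delta(\Omega))$, satisfying
\[
\zeta^* \otimes \mu\bigl(\{(\nu,x): x \in A_\nu^{(T)}\}\bigr) \geq 1-\epsilon-\delta/2 > 1-\epsilon-\delta
\]
for every $\mu \in \Delta(\Omega)$, which is precisely the inequality required for $(\epsilon+\delta)$-manipulability in Definition~\ref{manipulate}.
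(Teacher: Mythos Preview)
Your overall architecture---cast the problem as a Blackwell game between expert and adversary and invoke Martin's determinacy theorem---is exactly the route the paper attributes to Shmaya (2008). However, two steps in your sketch are genuine gaps rather than routine details.

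First, your claim that ``the adversary's behavior strategies correspond, by Kuhn's theorem, to theories $\mu\in\Delta(\Omega)$'' is false. In the game you set up, the adversary at stage $n$ conditions on the common history $(p_0,s_0,\dots,p_{n-1},s_{n-1})$ and can therefore react to the expert's past forecasts; such strategies are strictly richer than theories over $\Omega$, which depend only on past outcomes. Your ``key observation'' establishes only that $\inf_{\mu\in\Delta(\Omega)}\sup_{\text{expert}}\ge 1-\epsilon$, but Martin's theorem equates $\sup\inf$ and $\inf\sup$ over the \emph{full} strategy spaces of the Blackwell game. To deduce that the game value is at least $1-\epsilon$ you must show that for \emph{every} adversary strategy $\tau$---including one that adapts to the expert's announced forecasts---the expert has a response worth at least $1-\epsilon$. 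This is the substantive lemma in Shmaya (2008), and it does not follow from the acceptance hypothesis by simply ``playing the same theory $\mu$,'' since there is no single $\mu$ to copy when the adversary's outcome distribution depends on the expert's own play.

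Second, Martin's 1998 theorem is proved for \emph{finite} action sets, and your discretization-and-limit workaround does not go through for a merely Borel payoff $T$. Convergence of the values of the discretized games to the value of $G$ would require some form of continuity of the win set in the forecast coordinate, and your appeal to ``a standard reduction of $T$ to an upper-semicontinuous proxy'' is neither standard nor justified: an arbitrary Borel indicator on $(\Delta(S)\times S)^{\bbn}$ admits no such proxy in general. Shmaya's argument handles the uncountable expert action space $\Delta(S)$ by a different mechanism, and that part of the proof is not captured by your sketch.
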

\subsection{New result}
Our new result is that, for the compact and convex paradigm of exchangeable
processes, there exists an infinite prequential test which is
non-manipulable. Therefore, Proposition~\ref{infinite-old} about
manipulability of infinite tests does not hold for convex and compact
paradigms. In contrast, Proposition~\ref{finite-old} about manipulability of
finite tests holds for every compact and convex paradigm.

For the rest of the paper we fix the set of outcome $S=\{0,1\}$. Theorem~\ref%
{thetheorem} below and its proof applies with minor changes to the case of
arbitrary finite set of outcomes, however since we are mainly interested in
a counter-example the restriction to a two-elements outcome set is worth the
notational convenience it provides. We identify the set forecast $p\in
\Delta(S)$ with $p[1]\in [0,1]$.

Before stating our result, we give two simple examples of paradigms for which Proposition~\ref{infinite-old} does not hold. In Example~\ref{ex:compact} the paradigm is compact but not convex and in Example~\ref{ex:convex} the paradigm is convex but not compact. 
\begin{example}\label{ex:compact}Let $\Gamma$ be the paradigm of all i.i.d distributions. Let $T$ be the test that passes the expert if and only if \begin{equation}\label{eq:trivial}p_n=\limsup_{k\rightarrow\infty}(s_0+\dots+s_{k-1})/k\end{equation} 
for every $n\ge 0$ (so that the expert passes if he makes the same prediction in all days and this prediction matches the empirical frequency of outcomes). Then $T$ accepts the data-generating process in $\Gamma$ with probability $1$ and is strongly non-manipulable w.r.t.\ $\Gamma$.\end{example}
\begin{example}\label{ex:convex}Let $\Gamma$ be the convex hull of the paradigm of all i.i.d.\ distributions. Let $T$ be the test that passes the expert if and only if~\eqref{eq:trivial} is satisfied for all but finitely many $n$-s. Then $T$ accepts the data-generating process in $\Gamma$ with probability $1$ and is strongly non-manipulable w.r.t.\ $\Gamma$.\end{example}

A theory $\mu\in\Delta(S^\mathbb{N})$ is \emph{exchangeable} if $%
\mu(N(s_0,\dots,s_{n-1}))=\mu(N(s_{\pi(0)},\dots,s_{\pi(n-1)}))$ for every
partial realization $(s_0,s_1,\dots,s_{n-1})\in S^{<\mathbb{N}}$ and every
permutation $\pi$ over $\{0,1,\dots,n-1\}$. Clearly, the paradigm $\Gamma$
of exchangeable processes is compact and convex. Moreover, it follows from De-Finetti's Theorem (see below) that $\Gamma$ is the closed convex hull of the paradigm of all i.i.d.\ distributions.
\begin{theorem}
\label{thetheorem}Let $\Gamma$ be the paradigm of exchangeable processes.
Then there exists a prequential test $T$ such that $T$ accepts the
data-generating process in $\Gamma$ with probability $1$ and $T$ is strongly
non-manipulable w.r.t $\Gamma$.
\end{theorem}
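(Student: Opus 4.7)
The plan is a De Finetti reduction: exchangeable theories are parametrized by measures on $[0,1]$, so the problem reduces to a one-shot non-manipulability result on $\Delta([0,1])\times[0,1]$. Concretely, the map $D:\lambda\mapsto \mu_\lambda:=\int_0^1 \mathrm{Bern}(q)^{\mathbb{N}}\,d\lambda(q)$ is a homeomorphism from $\Delta([0,1])$ onto $\Gamma$. Under $\mu_\lambda$ the empirical frequency $q^*(x):=\lim_n n^{-1}\sum_{i<n}s_i$ exists almost surely and has distribution $\lambda$, and conditional on $q^*=q$ the realization is i.i.d.\ $\mathrm{Bern}(q)$. The goal is to extract two quantities from the prequential data: the realized parameter via $G(x):=q^*(x)$, and the prior via a Borel map $F:(\Delta(S)\times S)^{\mathbb{N}}\to \Delta([0,1])$ satisfying $F=\lambda$ on a $\mu_\lambda$-full-measure event.

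To construct $F$, write $M(a,b):=\int_0^1 q^a(1-q)^b\,d\lambda(q)$; exchangeability gives $\mu_\lambda(N(s_0,\dots,s_{n-1}))=M(k_n,n-k_n)$ where $k_n$ counts the ones in $s_0,\dots,s_{n-1}$, and the forecast satisfies $p_n=M(k_n+1,n-k_n)/M(k_n,n-k_n)$. Starting from $M(0,0)=1$ and reading forecasts along the path, one computes $M(k_n,n-k_n)$ and $M(k_n+1,n-k_n)$ for every $n$, and the identity $M(a,b+1)=M(a,b)-M(a+1,b)$ propagates these to neighboring pairs. On a realization with $q^*\in(0,1)$ every $M(a,b)$ is eventually determined, so $\lambda$ is recovered via the Hausdorff moment problem; on an all-$0$s or all-$1$s path only the boundary moments are accessed, but these already pin down $\lambda$. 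Defining $F$ to execute this procedure (and arbitrarily on the exceptional null set) yields a Borel map with the required property.

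Next, adapt the one-shot non-manipulability result of Olszewski and Sandroni (2009b) to the Polish space $X=[0,1]$ to obtain a Borel test $T^*:\Delta(X)\times X\to\{\fail,\pass\}$ such that $\lambda(\{q:T^*(\lambda,q)=\pass\})=1$ for every $\lambda$, and such that for every $\zeta\in\Delta(\Delta(X))$, $(\zeta\otimes\lambda^*)(\{(\lambda,q):T^*(\lambda,q)=\pass\})=0$ for co-meager many $\lambda^*$. Define the prequential test $T(p_0,s_0,p_1,s_1,\dots):=T^*(F(p,s),G(s))$. For the acceptance property: under $\mu_\lambda$ we have $F=\lambda$ and $G=q^*\sim\lambda$ almost surely, so $\mu_\lambda(T=\pass)=\lambda(\{q:T^*(\lambda,q)=\pass\})=1$. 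For strong non-manipulability: push a given $\zeta\in\Delta(\Gamma)$ through $D^{-1}$ to obtain $\tilde\zeta\in\Delta(\Delta(X))$, and intersect the co-meager set produced by the one-shot property with the co-meager set $\{\lambda^*:\lambda^*(\{0,1\})=0\}$. For any $\lambda^*$ in this intersection, $\mu_{\lambda^*}$-almost every realization $x$ has $q^*(x)\in(0,1)$, so the reconstruction succeeds regardless of which $\nu$ is drawn from $\zeta$; the pass event reduces to $T^*(\lambda_\nu,q^*(x))=\pass$ with $\lambda_\nu\sim\tilde\zeta$ and $q^*(x)\sim\lambda^*$, a joint probability-$0$ event. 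Since $D$ is a homeomorphism this co-meager family of $\lambda^*$ transports to a co-meager family of $\mu^*\in\Gamma$, as required by Definition~\ref{def:strongly}.

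The main obstacle is the reconstruction step. The moment-propagation algorithm must be shown to produce a Borel map $F$ whose value equals $\lambda_\nu$ on a large set of realizations---not merely realizations generated by $\nu$ itself, but realizations generated by any $\mu_{\lambda^*}$ with $\lambda^*$ in the relevant co-meager class. The key observation is that the recursion is purely algebraic in the observed forecasts and outcomes, so it computes $\lambda_\nu$ correctly whenever the path visits enough of the $(k,n-k)$-grid, which is guaranteed when $q^*(x)\in(0,1)$. A secondary technical point is transcribing the one-shot result from its original formulation to the parameter space $\Delta([0,1])$, which the authors indicate is a direct adaptation.
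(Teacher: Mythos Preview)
Your proposal is correct and follows the same strategy as the paper: reduce via De Finetti to a one-shot test on $\Delta([0,1])\times[0,1]$, invoke the Olszewski--Sandroni non-manipulable test there, and transport back through the homeomorphism $\lambda\leftrightarrow\mu_\lambda$.

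The one substantive difference is in the reconstruction step. The paper proves a sharper lemma: the forecasts of an exchangeable theory along \emph{any} realization $x$ determine $\lambda$ uniquely (the induction you sketch for the moments works along every path, not just those with $q^*(x)\in(0,1)$ or the constant paths). It then obtains the Borel map $F$ abstractly: the map $(\lambda,x)\mapsto(p,x)$ is injective Borel, hence by the Lusin--Souslin theorem its image is Borel and its inverse is Borel. This gives $F(p,x)=\bar\mu$ for \emph{every} $x$, so the equivalence ``$\mu$ passes $T$ over $x$ iff $t(\bar\mu,L(x))=\pass$'' holds unconditionally, and the non-manipulability argument needs no extra intersection with $\{\lambda^*:\lambda^*(\{0,1\})=0\}$. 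Your explicit moment-propagation algorithm is a fine alternative, but your restriction to $q^*(x)\in(0,1)$ is unnecessary, and you should note that Borelness of your $F$ (in particular the passage from the moment sequence back to the measure) still ultimately rests on the same descriptive-set-theoretic fact or an equivalent explicit argument.
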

\subsection{Additional comments}
\subsubsection*{Non-prequential tests}Proposition~\ref{infinite-old} relies on Dawid's prequential principle. If
one abandons this principle, allowing Alice to use forecasts of Bob's theory
after all finite histories, including those that did not occur, then some non-manipulable tests do
exist. In particular, Olszewski and Sandroni (2009b) construct a strongly non-manipulable
non-prequential test for the paradigm of all theories. Shmaya (2008) (Section 7) also have non-prequential
non-manipulable tests for the paradigm of all theories. In addition, Al-Najjar et al. (2010) construct a (non-prequential) non-manipulable test for the paradigm of theories which are learnable and
sufficient for predictions, which is convex but not compact.
\subsubsection*{Non Borel tests}Another assumption of Proposition~\ref{infinite-old} is that the test is a Borel function (This assumption is part of our definition of test in Section~\ref{setup}). Shmaya (2008) shows that under the axiom of choice, there exist non-manipulable prequential  tests which are not Borel.
\subsubsection*{Future independent tests}Olszewski and Sandroni (2008) introduce the class of future independent tests. For prequential tests, the future independence property can be seen as weakening of the finite horizon property: instead of requiring that the test terminates at finite time, it only requires that if the expert fail the test then this failure will be demonstrated in finite (possibly unbounded) time. 
Olszewski and Sandroni (2009a) show that many of their manipulability results
for future independent tests can also be proved for convex and
compact paradigms. The test we construct is not future independent. 
\subsubsection*{Strong non-manipulability}The definition of strong non-manipulability appeals to a topological notion
of `large' set of distribution as a co-meager set. The goal of the
definition is to capture the idea that, regardless of the strategy he uses
to creates forecasts, the uninformed expert will fail on a large set of
stochastic processes. This definition has been used in the expert testing
literature, but we recognize that such topological notion has
well-known drawbacks. In particular, it does not indicate odds of
discrediting the uninformed expert.  We view the main contribution of our
theorem to be the very fact that there exists a test which is not
manipulable. The strong non-manipulability property is a bonus.
 \section{Proof of Theorem~\protect\ref{thetheorem}}
\label{proof} 

\subsection{Preliminaries}

\subsubsection*{De-Finetti's Theorem}

For every $\lambda\in \Delta([0,1])$ let $\varepsilon_\lambda\in
\Delta(\{0,1\}^\mathbb{N})$ be the distribution of infinite sequence of
i.i.d coins with probability $q$ of success, where $q$ is drawn from $\lambda
$: 
\begin{equation}  \label{mu-lambda}
\varepsilon_\lambda(N(s_0,\dots,s_{n-1}))=\int%
\prod_{i=0}^{n-1}q^{s_i}(1-q)^{1-s_i}\lambda(\text{d} q).
\end{equation}
Clearly the distribution $\varepsilon_\lambda$ is exchangeable. De-Finetti's
Theorem states that the map $\lambda\in\Delta([0,1])\mapsto\varepsilon_%
\lambda$ is one-to-one and onto the set $\Gamma$ of exchangeable
distributions over $\{0,1\}^\mathbb{N}$, and its inverse is given by $%
\mu\in\Gamma\mapsto{\bar\mu}\in\Delta([0,1])$ where ${\bar\mu}\in
\Delta([0,1])$ is the push-forward of $\mu$ under $L$ (i.e., ${\bar\mu}%
(B)=\mu(L^{-1}(B))$ for every Borel subset $B$ of $[0,1]$) and $L:\{0,1\}^%
\mathbb{N}\rightarrow [0,1]$ is the limit average 
\begin{equation}  \label{limit-average}
L(s_0,s_1,\dots)= \limsup_{n\rightarrow\infty}(s_0+\dots+s_{n-1})/n.
\end{equation}
% De-Finetti
% celebrated theorem states that every exchangeable distribution $%
% \mu\in\Delta(\{0,1\}^\mathbb{N})$ is of the form $\varepsilon_{\bar\mu}$ for
% a unique ${\bar\mu}\in \Delta([0,1])$. The probability measure ${\bar\mu}%
% \in\Delta([0,1])$ that corresponds to an exchangeable $\mu\in\Delta(\{0,1\}^%
% \mathbb{N})$ is given by the distribution of the limit averages of a $\mu$%
% -random realization. Formally, let $L:\{0,1\}^\mathbb{N}\rightarrow [0,1]$
% be the limit average 
% \begin{equation}  \label{limit-average}
% L(s_0,s_1,\dots)= \limsup_{n\rightarrow\infty}(s_0+\dots+s_{n-1})/n.
% \end{equation}
% For an exchangeable distribution $\mu\in \Delta(\{0,1\}^\mathbb{N})$, let ${%
% \bar\mu}\in \Delta([0,1])$ be the push-forward of $\mu$ under $L$, i.e. ${%
% \bar\mu}(B)=\mu(L^{-1}(B))$ for every Borel subset $B$ of $[0,1]$.
% \begin{definetti-theorem}
% The map $\lambda\in\Delta([0,1])\mapsto\varepsilon_\lambda$ is one-to-one
% and onto the set $\Gamma$ of exchangeable distributions over $\{0,1\}^%
% \mathbb{N}$ and its inverse is given by $\mu\in\Gamma\mapsto{\bar\mu}%
% \in\Delta([0,1])$:% \begin{enumerate}
% % \item Let $\mu\in\Delta(\{0,1\}^\bbn)$ be exchangeable. Then $\mu=\varepsilon_\mubar$.
% % \item Let $\lambda\in \Delta([0,1])$ and let $\mu=\varepsilon_\lambda$. Then $\bar\mu=\lambda$.\end{enumerate}
% \end{definetti-theorem}
Since the map $\lambda\mapsto\varepsilon_\lambda$ is continuous and its
domain $\Delta([0,1])$ is compact it follows that the maps $%
\lambda\mapsto\varepsilon_\lambda$ and $\mu\mapsto\bar\mu$ are
homeomorphisms.

\subsubsection*{A non-manipulable test}

Our proof uses the following proposition. % \begin{proposition}
% \label{olszewski-sandroni-interval}There exists a Borel function $%
% t:\Delta([0,1])\otimes [0,1]\rightarrow \{\text{FAIL},\text{PASS}\}$ such
% that
% \begin{enumerate}
% \item For every $\bar\mu\in\Delta([0,1])$, $\bar\mu\left(\{p|t(\bar\mu,p)=%
% \text{PASS}\}\right)=1$.
% \item For every $\bar\zeta\in\Delta\left(\Delta([0,1])\right)$, one has 
% \begin{equation*}
% \bar\zeta\otimes\bar\mu\left(\left\{(\bar\nu,p)\colon  t(\bar\nu,p)=\text{%
% PASS}\right\}\right)=0
% \end{equation*}
% for co-meager many $\bar\mu\in\Delta([0,1])$.
% \end{enumerate}
% \end{proposition}

\begin{proposition}
\label{olszewski-sandroni-interval}Let $Q$ be a compact metric space and let 
$\Delta(Q)$ be equipped with the weak-$^\ast$ topology. There exists a Borel
function $t:\Delta(Q)\times Q\rightarrow \{\text{FAIL},\text{PASS}\}$ such
that

\begin{enumerate}
\item For every $\bar\mu\in\Delta(Q)$, $\bar\mu\left(\{q\in Q|t(\bar\mu,q)=%
\text{PASS}\}\right)=1$.

\item For every $\bar\zeta\in\Delta\left(\Delta(Q)\right)$, one has 
\begin{equation*}
\bar\zeta\otimes\bar\mu\left(\left\{(\bar\nu,q)\in\Delta(Q)\times Q\colon 
t(\bar\nu,q)=\text{PASS}\right\}\right)=0
\end{equation*}
for co-meager many $\bar\mu\in\Delta(Q)$.
\end{enumerate}
\end{proposition}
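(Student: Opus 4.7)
The plan is to set $t(\bar\mu, q) = \text{PASS}$ iff $q \in A_{\bar\mu}$ for a family $\{A_{\bar\mu}\}_{\bar\mu \in \Delta(Q)}$ of Borel subsets of $Q$ with $\bar\mu(A_{\bar\mu}) = 1$, constructed so that the joint indicator $(\bar\mu, q) \mapsto \mathbf{1}_{A_{\bar\mu}}(q)$ is Borel on $\Delta(Q) \times Q$. Property~(1) is then immediate, and property~(2) becomes the statement that, for every $\bar\zeta \in \Delta(\Delta(Q))$, the set $G_{\bar\zeta} = \{\bar\mu : \int \bar\mu(A_{\bar\nu})\, d\bar\zeta(\bar\nu) > 0\}$ is meager in $\Delta(Q)$. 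So the objective is to arrange that the collection $\{A_{\bar\nu}\}$ is sufficiently spread out in $\bar\nu$ that, for a topologically generic $\bar\mu$, the set $\{\bar\nu : \bar\mu(A_{\bar\nu}) > 0\}$ is $\bar\zeta$-null.

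For the construction of $A_{\bar\mu}$, I would proceed in two layers. First, a continuous thinning layer: fix a countable basis $\{U_k\}$ of $Q$ and a countable dense subset $\{q_k\} \subseteq Q$, and, measurably in $\bar\mu$, choose for each $n$ an open dense set $V_n(\bar\mu) \supseteq \{q_k\}$ of $\bar\mu$-measure below $1/n$ --- for instance, $V_n(\bar\mu) = \bigcup_k B(q_k, r_{n,k}(\bar\mu))$ with $r_{n,k}(\bar\mu)$ a suitable rational radius making $\bar\mu(B(q_k, r_{n,k}(\bar\mu))) < 2^{-n-k}$. The set $B_{\bar\mu} = \bigcup_n (Q \setminus V_n(\bar\mu))$ is meager, $F_\sigma$, of full $\bar\mu$-measure, and its indicator is jointly Borel in $(\bar\mu, q)$; but $B_{\bar\mu}$ depends continuously on $\bar\mu$, so close $\bar\nu$ yield similar $B_{\bar\nu}$ and property~(2) would fail if we stopped here. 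Second, a discontinuous fingerprint layer, adapting the construction of Olszewski and Sandroni (2009b): encode $\bar\mu$ as a point in a standard Borel space via $(\bar\mu(U_k))_k$, partition $\Delta(Q)$ into standard Borel cells using this encoding, and take $A_{\bar\mu}$ to be the intersection of $B_{\bar\mu}$ with a cell-indexed Borel selector of $\bar\mu$-full-measure sets that are almost disjoint across cells.

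For property~(2) itself, the argument is a Baire-category computation. Fix $\bar\zeta$ and decompose $G_{\bar\zeta}$ as the countable union over $k \in \mathbb{N}$ of the sets $G_{\bar\zeta}^k = \{\bar\mu : \int \bar\mu(A_{\bar\nu})\, d\bar\zeta(\bar\nu) > 1/k\}$. On each $G_{\bar\zeta}^k$, the discontinuous fingerprinting should force any weak-$^\ast$ open neighborhood of $\bar\mu$ to contain perturbations whose fingerprints land in cells disjoint from those supporting $\bar\zeta$-mass, giving each $G_{\bar\zeta}^k$ empty interior and hence $G_{\bar\zeta}$ meager. The main obstacle is the design of the fingerprint cells and cell-indexed selectors so that simultaneously (a)~the resulting $A_{\bar\mu}$ is jointly Borel, (b)~for generic $\bar\mu$ the integral $\int \bar\mu(A_{\bar\nu})\, d\bar\zeta(\bar\nu)$ genuinely vanishes, and (c)~this is robust across all mixing distributions $\bar\zeta$. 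This is the technical heart of the proposition, and is where the Olszewski--Sandroni (2009b) argument needs to be adapted from the sequence-space setting to an abstract compact metric $Q$.
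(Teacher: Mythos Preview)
The paper does not give its own proof of this proposition: it simply cites Olszewski and Sandroni (2009b), noting that their argument, written for $Q=\{0,1\}^{\mathbb N}$, carries over verbatim to an arbitrary compact metric space, and that only the case $Q=[0,1]$ is needed. So there is no in-paper argument to compare your proposal against; in effect the paper's ``proof'' and your proposal agree at the top level, since you too defer the technical heart to an adaptation of Olszewski--Sandroni (2009b).

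As a standalone sketch, however, your proposal has a real gap. The first layer---choosing open dense $V_n(\bar\mu)$ of $\bar\mu$-measure $<1/n$---already requires $Q$ to be perfect (if $q_0$ is isolated and $\bar\mu(\{q_0\})>0$ no such $V_n$ exists for large $n$); this is fine for $Q=[0,1]$ but not for the general compact metric $Q$ in the statement. More seriously, the second ``fingerprint'' layer is not a construction but a wish: partitioning $\Delta(Q)$ into Borel cells and attaching cell-indexed full-measure sets that are ``almost disjoint across cells'' is exactly the point where all the work lies, and nothing in your outline explains how to choose them so that for \emph{every} $\bar\zeta$ and co-meager many $\bar\mu$ the integral $\int\bar\mu(A_{\bar\nu})\,d\bar\zeta(\bar\nu)$ vanishes (your nowhere-dense argument for $G_{\bar\zeta}^k$ breaks down when $\bar\zeta$ has full support, since then no open set of fingerprints is $\bar\zeta$-null). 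In short, your proposal correctly isolates where the difficulty sits but does not resolve it; what remains is precisely the content of the Olszewski--Sandroni construction that both you and the paper invoke.
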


This proposition has the following nice interpretation: Assume that an
expert claims to know the distribution from which an element $q\in Q$ is
drawn. Say that the expert passes the test if $t(\bar\mu,q)=\text{PASS}$,
where $\bar\mu$ is the distribution proclaimed by the expert and $q$ is the
actual realization. The first property says that if the expert provides the
correct distribution of $q$ then he passes with probability $1$. The second
property ensures that, if the expert randomizes his theory according to some
distribution $\bar\zeta$, then he will fail the test with probability $1$
for a large (co-meager) set of truths.

Proposition~\ref{olszewski-sandroni-interval} was proved by Olszewski and
Sandroni (2009b). Their theorem is formulated for the underlying space of
realization $Q=\{0,1\}^\mathbb{N}$ but their theorem and proof carry
through for every compact metric space. We need the case $Q=[0,1]$.

\subsection{Proof}

We first prove that when forecasts are made using an exchangeable theory,
then the forecasts over a single realization already determines the entire
theory.

\begin{lemma}
Let $x=(s_0,s_1,\dots)\in\{0,1\}^\mathbb{N}$ be a realization and let $%
\mu^{\prime }=\varepsilon_{\lambda^{\prime }}$ and $\mu^{\prime \prime
}=\varepsilon_{\lambda^{\prime \prime }}$ be two exchangeable theories. If $%
\mu^{\prime }(1|s_0,\dots,s_{n-1})=\mu^{\prime \prime }(1|s_0,\dots,s_{n-1})$
for every $n\in\mathbb{N}$ then $\lambda^{\prime }=\lambda^{\prime \prime }$
(and, in particular, $\mu^{\prime }=\mu^{\prime \prime }$).
\end{lemma}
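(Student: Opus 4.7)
\emph{Plan.} The idea is to turn the hypothesis into equality of the ``generalised moments'' $\int q^{k_n}(1-q)^{n-k_n}\,\lambda(\text{d}q)$ for the two mixing measures, then bootstrap from one such moment per level $n$ to all of them, and finally invoke the Weierstrass approximation theorem. Writing $\sigma_n=(s_0,\dots,s_{n-1})$, note that $\mu(N(\sigma_n))=\prod_{i=0}^{n-1}\mu(s_i\mid\sigma_i)$, and each factor is either $p_i$ or $1-p_i$ depending on $s_i$. Since $\mu'$ and $\mu''$ share the same forecasts along $x$ and both start from $\mu(N(\emptysequence))=1$, induction on $n$ immediately gives $\mu'(N(\sigma_n))=\mu''(N(\sigma_n))$ for every $n$. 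By De-Finetti's representation, this is equivalent to
\begin{equation*}
\int q^{k_n}(1-q)^{n-k_n}\,(\lambda'-\lambda'')(\text{d}q)=0\quad\text{for every }n\ge 0,
\end{equation*}
where $k_n=s_0+\cdots+s_{n-1}$.

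Next, set $\nu=\lambda'-\lambda''$ and $b_{n,k}=\int q^k(1-q)^{n-k}\,\nu(\text{d}q)$ for $0\le k\le n$. The key step is to prove by induction on $n$ that $b_{n,k}=0$ for \emph{every} $k\in\{0,\dots,n\}$. The base case $b_{0,0}=\nu([0,1])=0$ is immediate since $\lambda'$ and $\lambda''$ are probability measures. For the inductive step, multiplying $q^k(1-q)^{n-k}$ by the trivial identity $q+(1-q)=1$ gives the Pascal-style recursion $b_{n,k}=b_{n+1,k+1}+b_{n+1,k}$. Under the inductive hypothesis that every $b_{n,k}$ vanishes, this forces $b_{n+1,k+1}=-b_{n+1,k}$ for $0\le k\le n$, so every level-$(n+1)$ value is determined by $b_{n+1,0}$ via $b_{n+1,k}=(-1)^k b_{n+1,0}$. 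The previous paragraph provides one further datum, $b_{n+1,k_{n+1}}=0$, which forces $b_{n+1,0}=0$ and hence $b_{n+1,k}=0$ for every $k$.

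Specialising to $k=n$ gives $\int q^n\,\nu(\text{d}q)=0$ for every $n\ge 0$, so $\lambda'$ and $\lambda''$ have identical polynomial moments. Since polynomials are dense in $C([0,1])$ by the Weierstrass approximation theorem, this forces $\nu=0$, whence $\lambda'=\lambda''$. The main obstacle is the inductive step: the non-obvious point is that a single vanishing moment per level suffices, because the Pascal recursion inherited from $q+(1-q)=1$ collapses the level-$(n+1)$ unknowns to a one-parameter family that the additional datum then pins down.
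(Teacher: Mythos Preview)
Your proof is correct and follows the same overall strategy as the paper's: both deduce $\mu'(N(\sigma_n))=\mu''(N(\sigma_n))$ by induction on $n$, translate this via the De-Finetti representation into $\int q^{k_n}(1-q)^{n-k_n}\,(\lambda'-\lambda'')(\textd q)=0$ for every $n$, then argue inductively that all moments of $\lambda'-\lambda''$ vanish, and finally invoke determinacy of compactly supported measures by their moments. The only difference is how the moment induction is organized: the paper simply notes that $q^{k_n}(1-q)^{n-k_n}$ is a polynomial in $q$ of degree exactly $n$ (with nonzero leading coefficient $(-1)^{n-k_n}$), so the system is upper-triangular in the monomial basis and one new equation per level pins down the next moment; you instead work in the Bernstein basis via the Pascal recursion $b_{n,k}=b_{n+1,k+1}+b_{n+1,k}$, which collapses level $n{+}1$ to a one-parameter family killed by the single datum $b_{n+1,k_{n+1}}=0$ --- a more explicit but equivalent bookkeeping of the same triangular structure.
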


\begin{proof}
Under the assumption of the lemma, it follows by induction from~(\ref{p-n-s}%
) that 
\begin{equation}  \label{mu-mu-prime}
\mu^{\prime }\left(N\left(s_0,\dots,s_{n-1}\right)\right)=\mu^{\prime \prime
}\left(N\left(s_0,\dots,s_{n-1}\right)\right)
\end{equation}
for every $n\in \mathbb{N}$. % By~(\ref{mu-lambda}) we get that 
% \begin{equation*}
% \begin{split}
% \mu^{\prime
% }&\left(N\left(s_0,\dots,s_{n-1}\right)\right)=\int%
% \prod_{i=0}^{n-1}p^{s_i}(1-p)^{1-s_i}\lambda^{\prime }(\text{d} p),\text{ and%
% } \\
% \mu^{\prime \prime
% }&\left(N\left(s_0,\dots,s_{n-1}\right)\right)=\int%
% \prod_{i=0}^{n-1}p^{s_i}(1-p)^{1-s_i}\lambda^{\prime \prime }(\text{d} p)
% \end{split}%
% \end{equation*}
% Therefore it follows from~(\ref{mu-mu-prime}) that 
From~(\ref{mu-mu-prime}) and~(\ref{mu-lambda}) it follows that 
\begin{equation*}
\int\prod_{i=0}^{n-1}q^{s_i}(1-q)^{1-s_i}\lambda^{\prime }(\text{d}
q)=\int\prod_{i=0}^{n-1}q^{s_i}(1-q)^{1-s_i}\lambda^{\prime \prime }(\text{d}
q)
\end{equation*}
for every $n$. Since $\prod_{i=0}^{n-1}q^{s_i}(1-q)^{1-s_i}$ is a polynomial
in $q$ of degree $n$, it follows again by induction that 
\begin{equation*}
\int q^n\lambda^{\prime }(\text{d} q)=\int q^n\lambda^{\prime \prime }(\text{%
d} q)
\end{equation*}%
for every $n$. Since a probability measure with bounded support is
determined by its moments it follows that $\lambda^{\prime }=\lambda^{\prime
\prime }$.
\end{proof}

\begin{corollary}
\label{corollary-moments}There exists a Borel subset $B$ of $[0,1]^\mathbb{N}%
\times\{0,1\}^\mathbb{N}$ and a Borel map $f:B\rightarrow \Delta([0,1])$
such that for every exchangeable theory $\mu$ and for every realization $%
x=(s_0,s_1,\dots)$ one has 
\begin{equation*}
\begin{split}
&(p_0,p_1,\dots,s_0,s_1,\dots)\in B,\text{ and} \\
&f(p_0,p_1,\dots,s_0,s_1,\dots)=\bar\mu.
\end{split}%
\end{equation*}
where $p_n=\mu(1|s_0,\dots,s_{n-1})$ and $\bar\mu$ is the push-forward of $%
\mu$ under the limit average $L:\{0,1\}^\mathbb{N}\rightarrow [0,1]$ defined
in~(\ref{limit-average}).
\end{corollary}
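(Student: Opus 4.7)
The plan is to upgrade the set-theoretic inversion in the preceding lemma to a Borel one by reading off the moments of $\bar\mu$ from $(p,s)$ in a Borel fashion, and then inverting the moment map on its compact image. Iterating~(\ref{p-n-s}) shows that
\begin{equation*}
\mu(N(s_0,\dots,s_{n-1}))=\prod_{i=0}^{n-1}p_i^{s_i}(1-p_i)^{1-s_i}=:M_n(p,s),
\end{equation*}
a continuous function of $(p,s)\in[0,1]^\mathbb{N}\times\{0,1\}^\mathbb{N}$. By~(\ref{mu-lambda}) and exchangeability, $M_n(p,s)=\int q^{k_n}(1-q)^{n-k_n}\,\bar\mu(\textd q)$, where $k_n=s_0+\cdots+s_{n-1}$.

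Next, I reproduce the inductive argument of the lemma in a Borel-measurable way to produce Borel functions $\alpha_n:[0,1]^\mathbb{N}\times\{0,1\}^\mathbb{N}\to\mathbb{R}$ giving the raw moment $\int q^n\,\bar\mu(\textd q)$. The integrand $q^{k_n}(1-q)^{n-k_n}$ is a polynomial in $q$ of degree exactly $n$ with leading coefficient $(-1)^{n-k_n}\neq 0$, so $\alpha_n$ can be written as an explicit linear combination of $M_n$ and $\alpha_0,\dots,\alpha_{n-1}$ whose coefficients depend measurably on $(p,s)$ (indeed continuously, since the product space is zero-dimensional in its $s$-coordinate). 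Now consider the moment map $\Phi:\Delta([0,1])\to[0,1]^\mathbb{N}$ defined by $\Phi(\lambda)=\bigl(\int q^n\,\lambda(\textd q)\bigr)_{n\ge 0}$. Each coordinate is weak-$^\ast$ continuous, so $\Phi$ is continuous; the Hausdorff moment problem (used in the proof of the lemma) gives injectivity; and compactness of $\Delta([0,1])$ then upgrades $\Phi$ to a homeomorphism onto its closed image. Setting $\alpha=(\alpha_n)_{n\ge 0}$, I take $B=\alpha^{-1}(\Phi(\Delta([0,1])))$ and $f=\Phi^{-1}\circ\alpha$ on $B$. The lemma guarantees that every valid pair $\bigl((\text{forecasts of }\mu\text{ over }x),\,x\bigr)$ lies in $B$ and that $f$ maps it to $\bar\mu$.

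The only nontrivial point to worry about is the Borel measurability of $\Phi^{-1}$ on its image, and this is delivered for free by the compactness of $\Delta([0,1])$, which turns the injective continuous map $\Phi$ into a homeomorphism onto a closed subset of $[0,1]^\mathbb{N}$. Everything else --- the product formula for $M_n$ and the inductive linear formula for $\alpha_n$ --- produces continuous, hence Borel, objects automatically, so the main task is simply bookkeeping to assemble $B$ and $f$ out of these ingredients.
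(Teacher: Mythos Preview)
Your argument is correct and takes a genuinely different route from the paper. The paper proceeds abstractly: it defines the forward map $g:\Delta([0,1])\times\{0,1\}^\mathbb{N}\to[0,1]^\mathbb{N}\times\{0,1\}^\mathbb{N}$ sending $(\lambda,x)$ to the pair (forecasts of $\varepsilon_\lambda$ along $x$, $x$), observes that $g$ is Borel and (by the preceding lemma) injective, and then invokes the Luzin--Souslin theorem (Kechris, Corollary~15.2) to conclude that the image $B=g(\Delta([0,1])\times\{0,1\}^\mathbb{N})$ is Borel and $g^{-1}$ is Borel; finally $f$ is the first coordinate of $g^{-1}$. Your proof, by contrast, is constructive: you read off $\mu(N(s_0,\dots,s_{n-1}))$ from the forecast product, recognize it as a mixed moment $\int q^{k_n}(1-q)^{n-k_n}\bar\mu(\textd q)$, and unwind the triangular system to recover the raw moments $\alpha_n$ as explicit continuous functions of $(p,x)$; you then invert the moment map $\Phi$ using only that a continuous injection from a compact space is a homeomorphism onto its image. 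What this buys you is an elementary argument that avoids the descriptive-set-theoretic black box entirely, at the cost of a little bookkeeping. A minor difference worth noting is that your $B=\alpha^{-1}(\Phi(\Delta([0,1])))$ may be strictly larger than the paper's $B$ (since a pair $(p,x)$ whose $\alpha$-sequence happens to be a valid moment sequence need not arise from an exchangeable theory), but the corollary as stated only demands that the valid pairs lie in $B$ and are mapped correctly, so this is harmless both here and in the downstream application.
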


\begin{proof}
Consider the Borel map $g:\Delta([0,1])\times\{0,1\}^\mathbb{N}\rightarrow
[0,1]^\mathbb{N}\times\{0,1\}^\mathbb{N}$ that is given by $%
g(\lambda,s_0,s_1,\dots)=(p_0,p_1,\dots,s_0,s_1,\dots)$ where $%
p_n=\mu(1|s_0,\dots,s_{n-1})$ for $\mu=\varepsilon_\lambda$. By the previous
lemma this map is one-to-one. Therefore its image $B=g\left(\Delta([0,1])%
\times\{0,1\}^\mathbb{N}\right)$ is a Borel set and the inverse map $%
g^{-1}:B\rightarrow \Delta([0,1])\times\{0,1\}^\mathbb{N}$ is a Borel map
(See Kechris (1994), Corollary 15.2). Take $f=\pi\circ g^{-1}$ where $%
\pi:\Delta([0,1])\times\{0,1\}^\mathbb{N} \rightarrow\Delta([0,1])$ is the
projection on the first coordinate. The assertion in the corollary follows
from the fact that if $\mu$ is exchangeable then $\mu=\varepsilon_\lambda$
when $\lambda=\bar\mu$.
\end{proof}

\begin{proof}[Proof of Theorem~\protect\ref{thetheorem}]
Consider the test $T:[0,1]^\mathbb{N}\times\{0,1\}^\mathbb{N}%
\rightarrow\{0,1\}$ which is given by 
\begin{equation*}
T(p,x)=%
\begin{cases}
t\bigl(f(p,x), L(x)\bigr), & \text{ if }(p,x)\in B \\ 
\text{FAIL}, & \text{ otherwise.}%
\end{cases}%
\end{equation*}
for every realization $x=(s_0,s_1,\dots)\in\{0,1\}^\mathbb{N}$ and every
sequence $p=(p_0,p_1,\dots)\in [0,1]^\mathbb{N}$ of forecasts, where $%
B\subseteq [0,1]^\mathbb{N}\times\{0,1\}^\mathbb{N}$ and $f:B\rightarrow
\Delta([0,1])$ are as in Corollary~\ref{corollary-moments}, $L:\{0,1\}^%
\mathbb{N}\rightarrow [0,1]$ is the limit average~(\ref{limit-average}), and 
$t$ is the test defined in Proposition~\ref{olszewski-sandroni-interval} for
the interval $Q=[0,1]$. From Corollary~\ref{corollary-moments} and the
definition of $T$ it follows that 
\begin{equation}  \label{T-t}
\mu\text{ passes }T\text{ over }x\Longleftrightarrow t(\bar\mu, L(x))=\text{%
PASS}
\end{equation}
for every realization $x\in\{0,1\}^\mathbb{N}$ and every exchangeable
distribution $\mu$. We now claim that the desired properties of $T$ follow
from the corresponding properties of $t$. Indeed, for every exchangeable $%
\mu $ 
\begin{equation*}
\mu\left(A_\mu^{(T)}\right)=\mu\left(\{x|t(\bar\mu,L(x))=\text{PASS}%
\}\right)=\bar\mu\left(\{q|t(\bar\mu,q)=\text{PASS}\}\right)=1
\end{equation*}
where the first equality follows from~(\ref{T-t}), the second from the
definition of $\bar\mu$ and the third from Proposition~\ref%
{olszewski-sandroni-interval}. Thus $T$ accepts exchangeable data generating
processes with probability $1$.

Let $\zeta\in\Delta(\Gamma)$ and let $\bar\zeta\in\Delta\left(\Delta([0,1])%
\right)$ be the push-forward of $\zeta$ under the map $\nu\mapsto\bar\nu$ 
% , i.e. 
%\begin{equation}  \label{barzeta}
%\bar\zeta(B)=\zeta\left(\{\nu\in\Gamma|\bar\nu\in B\}\right)
%\end{equation}
%for every Borel subset $B$ of $\Delta([0,1])$, 
where $\bar\nu\in\Delta([0,1])$ is the pushforward of $\nu$ under $L$ for
every $\nu\in\Gamma$. Fix a theory $\mu\in\Gamma$ and let $\bar\mu$ be the
pushforward of $\mu$ under $L$. Then%, for every exchangeable $\mu$,
\begin{multline*}
\zeta\otimes\mu\left(\left\{(\nu,x)\colon x\in
A_\nu^{(T)}\right\}\right)= \\
\zeta\otimes\mu\left(\left\{(\nu,x)\colon t(\bar\nu,L(x))=\text{PASS}%
\right\}\right)= \bar\zeta\otimes\bar\mu\left(\left\{(\bar\nu,q)\left
|t(\bar\nu,q)=\text{PASS}\right.\right\}\right)
\end{multline*}
where the first equality follows from~(\ref{T-t}) and the second from the
definitions of $\bar\zeta$ and $\bar\mu$. Therefore the set of all $\mu$-s
such that $\zeta\otimes\mu\left(\left\{(\nu,x)\colon T(\nu,x)=\text{PASS}%
\right\}\right)=0$ is the preimage of the set of all $\bar\mu$ such
that $\bar\zeta\otimes\bar\mu\left(\left\{(\bar\nu,q)\colon t(\bar\nu,q)=%
\text{PASS}\right\}\right)=0$ under the map $\mu\mapsto\bar\mu$.
Since the later set is co-meager by Theorem~\ref{olszewski-sandroni-interval}
and the map $\mu\mapsto\bar\mu$ is homeomorphism, it follows that the former
set is co-meager, as desired.
\end{proof}

\section{Testing and merging}

\label{merging} The expert testing literature asks whether an uninformed
expert can produce forecasts that appear to be are as good as the forecasts
of the informed given the observed outcomes. A related question is whether
an uninformed expert can produce forecasts that are close to the forecasts
of the informed expert. In this section we show that the answer to these two
questions might be different.

Recall that a distribution $\nu\in\Delta(S^\mathbb{N})$ \emph{merges} with a
distribution $\mu\in\Delta(S^\mathbb{N})$ if 
\begin{equation*}
\nu\left(s|s_0,\dots,s_{n-1}\right)-\mu\left(s|s_0,\dots,s_{n-1}\right)%
\xrightarrow[n\rightarrow\infty]{}0
\end{equation*}
for $\mu$-almost every realization $(s_0,s_1,s_2,\dots)$. This notion of
merging was introduced by Kalai and Lehrer (1994) as a weakening of
Blackwell and Dubins merging (1962). Sorin (1999) relates merging with
several game theoretic models. Kalai et al.\ (1999) relate merging with certain classes of tests.

Say that a paradigm $\Gamma$ is \emph{learnable} if there exists $%
\nu\in\Gamma$ such that $\nu$ merges with every $\mu\in\Gamma$. If $\Gamma$
is learnable then an uninformed expert can learn to predict, i.e., to provide
forecasts that becomes arbitrary close to the forecasts of the informed
expert.

The paradigm of all exchangeable theories is learnable: If $\Omega=\{0,1\}$
then the theory $\nu=\varepsilon_\lambda$ given in~\eqref{mu-lambda} where $%
\lambda$ is the uniform distribution on $[0,1]$ merges with all exchangeable
theories. However, by our Theorem~\ref{thetheorem}, there exists a test that
screens an uninformed expert from the informed expert based on the outcomes
of the process.

The paradigm of all theories is not learnable, so that an uninformed expert
cannot produce forecasts that are close to the informed expert's forecasts
for every process, but, by Proposition~\ref{infinite-old}, he can do as good
as the informed expert in every prequential test. \appendix
\section{Proof of Proposition~\protect\ref{finite-old}}

\label{proof-old} Let $F:\Delta(\Gamma)\otimes\Gamma\rightarrow [0,1]$ be
given by $F(\zeta,\mu)=\zeta\otimes\mu\left(\left\{(\nu,x)\in\Gamma\times%
\Omega\colon x\in A_\nu^{(T)}\right\}\right)$. Then $F$ is bilinear
and, since $T$ is finite, continuous in its second argument. It follows from
Fan's Minimax Theorem (1953) that 
\begin{equation}  \label{fan}
\sup_\zeta\min_\mu F(\zeta,\mu)=\min_\mu\sup_\zeta F(\zeta,\mu),
\end{equation}
where the suprema are over all $\zeta\in\Delta(\Gamma)$ and the minima are
over all $\mu\in\Gamma$.

For every $\mu\in\Gamma$, if $\zeta$ is Dirac atomic measure over $\mu$ then 
$F(\zeta,\mu)\ge 1-\epsilon$ since $T$ accepts data generating process in $%
\Gamma$ with probability $1-\epsilon$. Therefore it follows from~(\ref{fan})
that for every $\delta>0$ there exists $\zeta\in\Delta(\Gamma)$ such that $%
\min_\mu F(\zeta,\mu)>1-(\epsilon+\delta)$, i.e., that $T$ is $%
(\epsilon+\delta)$-manipulable.

\end{document}